\newtheorem{theorem}{Theorem}
\newtheorem{subtheorem}{Theorem}
\newtheorem{lemma}[theorem]{Lemma}
\theoremstyle{definition}
\newtheorem{definition}[theorem]{Definition}
\theoremstyle{remark}
\newtheorem{remark}[theorem]{Remark}
\newcommand{\conv}[1]{\mathrm{Conv}(#1)}
\newcommand{\CONV}[1]{\mathrm{CONV}_{#1}}
\DeclareMathOperator*{\Avg}{Avg}
\newcommand{\avg}[1]{\Avg_{#1}}
\newcommand{\vol}{\mathrm{Vol}}
\newcommand{\dd}{\,\mathrm{d}}
\newcommand{\defeq}{\stackrel{\scriptsize\mathrm{def}}{=}}
\author{Gregorio Malajovich}
\thanks{This paper was written while visiting the Simons Institute for the Theory of Computing in the
University of California at Berkeley. This visit was funded by CAPES (Coordenação de 
Aperfeiçoamento de Pessoal de Nível Superior, Brazil. Proc. BEX 2388/14-6).}
\address{Departamento de Matemática Aplicada, Instituto de Matemática, Universidade Federal do
Rio de Janeiro. Caixa Postal 68530, Rio de Janeiro RJ 21941-909, Brasil.}\email{gregorio.malajovich@gmail.com}
\title{Average mixed volume under projection}
\date{\today}
\subjclass[2010]{Primary 52A39}
\keywords{mixed volume}
\begin{document}
\maketitle
\begin{abstract}
The average mixed volume of a random projection of $d$ convex bodies in $\mathbb R^n$ is
bounded above in terms of a quermassintegral.
\end{abstract}

\section{Introduction}

The {\em mixed volume} of the $n$-tuple of convex bodies $(\mathcal A_1, \dots, \mathcal A_n)$,
$\mathcal A_i \subset \mathbb R^n$ is defined as
\[
V(\mathcal A_1, \dots, \mathcal A_n)
\defeq
\frac{1}{n!}
\frac{\partial^n}
{\partial t_1 \partial t_2 \cdots \partial t_n}
\mathrm{Vol}(t_1 \mathcal A_1 + \cdots + t_n \mathcal A_n) 
\]
where $t_1, \dots, t_n \ge 0$ and the derivative is taken at $t=0$. 
The normalization factor $1/n!$ ensures that $V(\mathcal A, \dots, \mathcal A) = \vol(\mathcal A)$.
In that sense the mixed volume generalizes the usual volume.
\par
The definition above 
is still valid if we allow the $\mathcal A_i$ to have empty volume, so in this paper
we will only require the $\mathcal A_i$ to be closed convex sets.
\medskip
\par
The mixed volume was introduced by \ocite{Minkowski} for $n=3$. If
$\mathcal A \subset \mathbb R^3$ is a convex body, the Steiner
formula
\[
\vol(\mathcal A + \epsilon B^3) =
\vol(\mathcal A) 
+
S \epsilon 
+
\pi B \epsilon^2
+
\frac{4}{3} \pi \epsilon^3
\]
implies that 
\[
V(\mathcal A, \mathcal A, B^3) = 3S 
\hspace{2em}\text{and}\hspace{2em}
V(\mathcal A, B^3, B^3)= 3 \pi B
\] 
with $S$ is the total area of $\partial A$ and $B$ its total mean curvature
(assuming $\partial A$ is smooth).
The quantities $V(\mathcal A, \dots, \mathcal A, B^n, \dots, B^n)$ are known as {\em quermassintegrals}. There is a vast
literature on recovering quantities such as the area of $\partial \mathcal A$ from the area of projections
of $\mathcal A$.
\medskip
\par
However, most of the literature deals with mixed volumes for two different
convex bodies, one being the ball $B^n$. Mixed volumes with many possibly
different convex bodies seem to arise mostly in connection with counting
zeros of sparse polynomial systems or obtaining a starting system for
path-following. The main result in this note arised during the complexity 
analysis of such an algorithm.
\medskip
\par
Let $G(d,n)$ denote the Grassmannian manifold of $d$-dimensional subspaces of $\mathbb R^n$. It is
endowed with the unique probability measure invariant under the orthogonal group $O(n)$.
In this note we prove the following result:
\begin{theorem}\label{main} Let $\mathcal A_1, \dots, \mathcal A_d$ be compact convex sets in $\mathbb R^n$. 
Then,
\[
\avg{P \in G(d,n)} V(P(\mathcal A_1), \cdots, P(\mathcal A_d))  \le
\frac
{\vol(B^d)}
{\vol(B^n)}
V(\mathcal A_1, \cdots, \mathcal A_d, B^n, \cdots, B^n) 
.
\]
\end{theorem}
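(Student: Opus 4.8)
The plan is to prove the sharper fact that the inequality is actually an \emph{equality}, by establishing a linear Kubota-type identity for mixed volumes. Two classical ingredients drive the argument. The first is the formula for the mixed volume of bodies together with a segment: for a unit vector $u$ and convex bodies $\mathcal K_1, \dots, \mathcal K_{n-1}$ in $\mathbb R^n$,
\[
V(\mathcal K_1, \dots, \mathcal K_{n-1}, [0,u]) = \frac{1}{n}\, v\bigl(\mathcal K_1|u^\perp, \dots, \mathcal K_{n-1}|u^\perp\bigr),
\]
where $\mathcal K_i|u^\perp$ denotes orthogonal projection onto the hyperplane $u^\perp$ and $v$ is the $(n-1)$-dimensional mixed volume. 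The second is that the unit ball is, up to a scalar, the Minkowski average of the segments $[0,u]$: comparing support functions gives $\int_{S^{n-1}} [0,u]\dd\sigma(u) = \kappa_n B^n$ with $\kappa_n = \vol(B^{n-1})/(n\,\vol(B^n))$, where $\sigma$ is the invariant probability measure on the sphere and the integral is a Minkowski (Aumann) integral.

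First I would use these to peel off one ball and drop the ambient dimension by one. Since mixed volume is multilinear and continuous, I can replace one copy of $B^n$ by the averaged segment and pull the average outside $V$:
\[
V(\mathcal A_1, \dots, \mathcal A_d, B^n, \dots, B^n) = \frac{1}{\kappa_n}\avg{u\in S^{n-1}} V\bigl(\mathcal A_1, \dots, \mathcal A_d, [0,u], B^n, \dots, B^n\bigr),
\]
where the right-hand side now carries $n-d-1$ copies of $B^n$. Applying the segment formula to each integrand and using $B^n|u^\perp = B^{n-1}$ turns this into the average over hyperplanes $u^\perp \cong \mathbb R^{n-1}$ of a quermassintegral of the projected bodies $\mathcal A_i|u^\perp$ with $n-1-d$ copies of $B^{n-1}$; the constant produced is $1/(n\kappa_n) = \vol(B^n)/\vol(B^{n-1})$.

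I would then run this as an induction on $n$ with $d$ fixed, the base case $n=d$ being the trivial identity. Feeding the inductive hypothesis into the inner $(n-1)$-dimensional quermassintegral expresses it as an average over $d$-planes $Q \subset u^\perp$ of the $d$-dimensional mixed volume of the bodies $\mathcal A_i|u^\perp|Q = \mathcal A_i|Q$; here I use that orthogonal projection factors through any intermediate subspace containing the target. The telescoping product of the constants $\vol(B^j)/\vol(B^{j-1})$ from $j=n$ down to $j=d+1$ collapses to $\vol(B^n)/\vol(B^d)$, exactly the reciprocal of the factor in the theorem.

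The step I expect to be the main obstacle is identifying the resulting composite average — first a uniform hyperplane $u^\perp$, then a uniform $d$-plane $Q$ inside it — with the single $O(n)$-invariant average over $G(d,n)$. This is a disintegration/fibration statement for the Grassmannian; the clean way to see it is that building an orthonormal $(n-d)$-frame one vector at a time, each uniform in the orthogonal complement of the previous ones, yields a uniformly random frame, so that $Q = \{u_1, \dots, u_{n-d}\}^\perp$ is Haar-distributed on $G(d,n)$ by $O(n)$-invariance. I would also need to justify the interchange of $V$ with the Minkowski integration, which follows from multilinearity and continuity of $V$ under Hausdorff limits. Once the identity is in hand the stated bound is immediate; and should the frame-to-Haar identification resist a direct treatment, replacing the evaluation of the inner projection by a one-sided estimate at any stage would still yield the inequality $\le$, which is all the theorem asserts.
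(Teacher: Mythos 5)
Your proposal is correct, and it actually establishes more than the paper claims: the inequality is an equality (a Kubota-type projection formula for mixed volumes), and your constants check out — $\kappa_n=\vol(B^{n-1})/(n\,\vol(B^n))$, so $1/(n\kappa_n)=\vol(B^n)/\vol(B^{n-1})$, and the product telescopes to $\vol(B^n)/\vol(B^d)$, the reciprocal of the theorem's factor. Your route is genuinely different from the paper's. The paper does not descend through dimensions: it rotates coordinates so that the claim becomes a bound on $\avg{Q\in O(n)} V(\mathcal A_1,\dots,\mathcal A_d,[0,q_{d+1}],\dots,[0,q_n])$, and then peels off one segment at a time using an abstract lemma: for any Minkowski-linear, monotone $F:\CONV{k}\to\mathbb R^+$ one has $\avg{c\in S^{k-1}} F([0,c])\le r_k F(B^k)$ with $r_k=\vol(B^{k-1})/\vol(S^{k-1})$. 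That lemma is proved by partitioning the sphere into small cells $V_\lambda$, dominating each segment $[0,c]$, $c\in V_\lambda$, by the cone $\conv{\{0\}\cup V_\lambda}$ via monotonicity, combining by linearity, and bounding the resulting Minkowski combination by its circumscribed ball, whose limiting radius is computed with the divergence theorem — monotonicity only gives containment, hence only $\le$, and sharpness is checked afterwards on balls. You instead use the exact rotational-symmetry identity $\int_{S^{n-1}}[0,u]\dd\sigma(u)=\kappa_n B^n$, combine it with the classical segment formula $V(\mathcal K_1,\dots,\mathcal K_{n-1},[0,u])=\frac1n v(\mathcal K_1|u^\perp,\dots,\mathcal K_{n-1}|u^\perp)$, and induct on the ambient dimension; your two supporting steps (interchanging $V$ with the Minkowski integral, which follows from multilinearity plus the Lipschitz continuity that linearity and monotonicity already imply; and identifying the composite hyperplane-then-$d$-plane average with the Haar average on $G(d,n)$ by uniqueness of the invariant measure) are both correct and standard. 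As for what each approach buys: the paper's lemma is self-contained and applies to any monotone Minkowski-linear functional, not just mixed volumes, while your argument leans on the segment-projection formula (which the paper also invokes, without proof, in its restatement $d!\,V(P(\mathcal A_1),\dots,P(\mathcal A_d))=n!\,V(\mathcal A_1,\dots,\mathcal A_d,[0,q_{d+1}],\dots,[0,q_n])$) but yields the stronger identity, so sharpness comes for free rather than from a separate example, and the dimension-descent bookkeeping avoids the delicate conditioning on partial frames inside $O(n)$ that the paper's iteration requires.
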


By replacing $\mathcal A_i$ with $B^n$ and $P(\mathcal A_i)$ by $B^d$, one quickly checks that the
bound above is sharp.

\section{Proof of the theorem}

Before proving theorem \ref{main}, we will restate it in a more convenient form.
If $Q \in O(n)$, let $q_j$ denote the $j$-th row of $Q$. If $Q$ is uniformly distributed
in $O(n)$ with respect to the Haar measure, the projection $P: \mathbb R^n \rightarrow \mathbb R^d$,
\[
P: x \mapsto 
\left[
\begin{matrix} 
q_1 x \\ \vdots \\ q_d x
\end{matrix}
\right]
\]
is uniformly distributed in $G(d,n)$. We have
\[
d!\, V( P(\mathcal A_1), \dots, P(\mathcal A_d)) = n!\, V(\mathcal A_1, \dots, \mathcal A_d, [0, q_{d+1}], \cdots, [0, q_n])
.
\]

Therefore, what we need to prove is the following restatement of Theorem~\ref{main}:

\begin{subtheorem} Let $\mathcal A_1, \dots, \mathcal A_d$ be compact convex sets in $\mathbb R^n$. 
Then,
\[
\begin{split}
\avg{Q \in O(n)} V(\mathcal A_1, \dots, \mathcal A_d, [0,q_{d+1}],& \dots, [0,q_{d+n}])  \le\\
&
\le 
\frac
{d!\, \vol(B^d)}
{n!\, \vol(B^n)}
\, V(\mathcal A_1, \cdots, \mathcal A_d, B^n, \cdots, B^n) 
.
\end{split}
\]
\end{subtheorem}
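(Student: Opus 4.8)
The plan is to induct on the codimension $k=n-d$. When $k=0$ the projection is an isometry and both sides equal $V(\mathcal A_1,\dots,\mathcal A_n)$, so there is nothing to prove. For the inductive step I would peel off the last segment $[0,q_n]$, reducing an $n$-dimensional instance of codimension $k$ to an $(n-1)$-dimensional instance of codimension $k-1$ living in the hyperplane $q_n^\perp$, and then invoke the induction hypothesis. The main tool is a single-segment projection formula: for compact convex $\mathcal B_1,\dots,\mathcal B_{n-1}\subset\mathbb R^n$ and a unit vector $u$, with $\pi_u$ the orthogonal projection onto $u^\perp$,
\[
V(\mathcal B_1,\dots,\mathcal B_{n-1},[0,u]) = \tfrac1n\, V^{u^\perp}(\pi_u\mathcal B_1,\dots,\pi_u\mathcal B_{n-1}),
\]
where $V^{u^\perp}$ is the mixed volume computed inside $u^\perp$. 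I would prove this from the elementary fact that Minkowski addition of $t[0,u]$ increases the volume by exactly $t$ times the volume of the shadow on $u^\perp$; differentiating the homogeneous expansion and reading off the cross term gives the constant $1/n$, and the all-segments case $V([0,w_1],\dots,[0,w_n])=\frac1{n!}|\det(w_1,\dots,w_n)|$ provides a consistency check.

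Applying this with $u=q_n$, and noting that $q_{d+1},\dots,q_{n-1}$ are orthogonal to $q_n$ so that $\pi_{q_n}[0,q_j]=[0,q_j]$, the left-hand integrand becomes $\frac1n V^{q_n^\perp}(\pi_{q_n}\mathcal A_1,\dots,\pi_{q_n}\mathcal A_d,[0,q_{d+1}],\dots,[0,q_{n-1}])$. Now I condition on $q_n=u$: the remaining rows $q_1,\dots,q_{n-1}$ are Haar-distributed as an orthonormal frame of $u^\perp\cong\mathbb R^{n-1}$, so $q_{d+1},\dots,q_{n-1}$ form a uniformly random orthonormal $(n-1-d)$-frame there. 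Hence the conditional average is precisely the left-hand functional for the projected bodies in dimension $n-1$, and writing $L_m$ for the left side in dimension $m$ I obtain the recursion
\[
L_n(\mathcal A_1,\dots,\mathcal A_d) = \tfrac1n \avg{u\in S^{n-1}} L_{n-1}(\pi_u\mathcal A_1,\dots,\pi_u\mathcal A_d).
\]

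For the right-hand side I would represent the ball as a zonoid: matching support functions gives $B^n=\beta_n\,\avg{u\in S^{n-1}}[0,u]$ as a Minkowski average, with $\beta_n=n\,\vol(B^n)/\vol(B^{n-1})$ pinned down by the identity $\int_{S^{n-1}}|\langle u,e\rangle|\dd u = 2\,\vol(B^{n-1})$. Since mixed volume is Minkowski-multilinear and continuous, I may pull this average out of one ball slot; combining with the segment-projection lemma and $\pi_u B^n=B^{n-1}$ shows that the right-hand functional $R_m\defeq\frac{d!\,\vol(B^d)}{m!\,\vol(B^m)}V(\mathcal A_1,\dots,\mathcal A_d,B^m,\dots,B^m)$ satisfies the identical recursion $R_n=\frac1n\avg{u}R_{n-1}(\pi_u\mathcal A_1,\dots,\pi_u\mathcal A_d)$, provided $(n-1)!\,\vol(B^{n-1})\,\beta_n=n!\,\vol(B^n)$ — which is exactly the value of $\beta_n$ found above. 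Because $L$ and $R$ obey the same averaging recursion and agree at the base case, feeding the induction hypothesis $L_{n-1}\le R_{n-1}$ into the monotone average over $u$ yields $L_n\le R_n$, closing the argument. In fact the two recursions coincide, so this route yields equality, which is consistent with the stated sharpness of the bound.

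I expect the main obstacle to be the constant bookkeeping rather than any single deep idea: obtaining the factor $1/n$ in the segment-projection lemma, computing $\beta_n$ from the zonoid representation, and verifying $(n-1)!\,\vol(B^{n-1})\,\beta_n=n!\,\vol(B^n)$ so that the two recursions line up exactly. The two analytic points requiring care are the conditional Haar description of the frame after fixing $q_n=u$, and the justification — by continuity together with Minkowski-multilinearity — of interchanging the mixed volume with the Minkowski average defining $B^n$.
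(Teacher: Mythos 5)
Your proposal is correct, and it takes a genuinely different route from the paper's. The paper shares your outer skeleton — condition on the last row $q_n$ and handle one segment slot at a time — but its engine is Lemma~\ref{lem:needle}: for an \emph{arbitrary} Minkowski-linear, monotone functional $F$ on $\CONV{k}$ one has $\avg{c\in S^{k-1}}F([0,c])\le r_k F(B^k)$, proved by covering the sphere with small cells, replacing each segment by the cone over its cell (monotonicity), summing by linearity, and computing the limiting radius by the divergence theorem; iterating this once per segment slot yields the constant $r_{d+1}\cdots r_n$. Your engine is instead a pair of exact identities special to mixed volumes: the segment-projection formula $V(\mathcal B_1,\dots,\mathcal B_{n-1},[0,u])=\frac1n V^{u^\perp}(\pi_u\mathcal B_1,\dots,\pi_u\mathcal B_{n-1})$ and the zonoid representation $B^n=\beta_n\avg{u\in S^{n-1}}[0,u]$, with $\beta_n=n\vol(B^n)/\vol(B^{n-1})=\vol(S^{n-1})/\vol(B^{n-1})$, which makes your normalization identity $(n-1)!\,\vol(B^{n-1})\,\beta_n=n!\,\vol(B^n)$ automatic. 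Since both identities are equalities, your recursion shows that the two sides of the subtheorem coincide: the statement actually holds with \emph{equality}, being the polarization in the slots $1,\dots,d$ of the classical Kubota formula, so the inequality and its sharpness follow at once — a strictly stronger conclusion than the paper proves. What the paper's route buys in exchange is generality: the needle lemma applies to any linear monotone functional, for which no exact projection formula exists. The two points you flagged are indeed the only ones needing care, and both are standard: given $q_n=u$, invariance of Haar measure makes $(q_1,\dots,q_{n-1})$ a uniform orthonormal frame of $u^\perp$; and the interchange of $V$ with the spherical average follows either from zonotope approximation together with continuity and multilinearity of $V$, or most cleanly from the representation of $V$ as the integral of the support function of its last argument against the mixed area measure of the others, combined with the pointwise identity $h_{B^n}=\beta_n\avg{u}h_{[0,u]}$ and Fubini.
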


\begin{remark} Explicitly,
\[
\frac
{d!\, \vol(B^d)}
{n!\, \vol(B^n)}
=
\frac{1}{(2\sqrt{\pi})^{n-d}} \frac {\Gamma\left(\frac{d+1}{2}\right)} {\Gamma\left(\frac{n+1}{2}\right)} 
.
\]
This is an immediate consequence of the duplication formula~\cite{Abramowitz-Stegun}*{formula 6.1.18},
\[
\Gamma(2z) = \frac{1}{\sqrt{2 \pi}} 2^{2z-1/2}\, \Gamma(z)\, \Gamma(z+1/2).
\]
\end{remark}

Let $\CONV{n}$ denote the set of all compact convex subsets of $\mathbb R^n$.
This set is closed under Minkowski sum and under multiplication by non-negative
real numbers. In that sense, $\CONV{n}$ is an `algebra' over the non-negative
real numbers. We recall the following definitions:

\begin{definition}
A function $F: \CONV{n} \rightarrow \mathbb R^{+}$ is linear if for all 
$t_1, t_2 \in \mathbb R^+$ and for all $\mathcal A_1, \mathcal A_2 \in \CONV{n}$,
$F(t_1 \mathcal A_1 + t_2 \mathcal A_2) = t_1 F(\mathcal A_1) + t_2 F(\mathcal A_2)$.
\end{definition}

\begin{definition}
A function $F: \CONV{n} \rightarrow \mathbb R^{+}$ is monotone if 
$\mathcal A \subseteq \mathcal B$ implies that $F(\mathcal A) \le F(\mathcal B)$.
\end{definition}

The mixed volume $V(\mathcal A_1, \dots, \mathcal A_n)$ is linear and
monotone in each of the variables. The key for proving theorem~\ref{main} is the following Lemma,
that I could not find it in the literature.

\begin{lemma}\label{lem:needle} Let  $F: \CONV{k} \rightarrow \mathbb R^{+}$ be linear and monotone.
Then
\[
\avg{c \in S^{k-1}} ( F([0,c])) \le r_k F(B^k)
\]
with $r_k = \frac{\Gamma(k/2)}{\sqrt{\pi} (k-1)\,  \Gamma\left( \frac{k-1}{2} \right)}$.
This constant $r_k$ is the smallest with such a property.
\end{lemma}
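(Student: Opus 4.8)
The plan is to recognize the right-hand side as the value of $F$ on the Minkowski average of the segments $[0,c]$, and then to transfer the resulting identity into the desired inequality using monotonicity. First I would record the \emph{zonoid} computation that pins down $r_k$. For a fixed unit vector $u$ the support function of $[0,c]$ is $h_{[0,c]}(u) = \langle u, c\rangle_+$, and, since the distribution of $\langle u,c\rangle$ for uniform $c$ has density proportional to $(1-t^2)^{(k-3)/2}$,
\[
\avg{c \in S^{k-1}} \langle u, c\rangle_+ = \frac{\int_0^1 t\,(1-t^2)^{(k-3)/2}\dd t}{\int_{-1}^1 (1-t^2)^{(k-3)/2}\dd t} = \frac{1/(k-1)}{\sqrt{\pi}\,\Gamma((k-1)/2)/\Gamma(k/2)} = r_k,
\]
independently of $u$ by rotational symmetry. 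Hence, at the level of support functions, the Minkowski average $\avg{c\in S^{k-1}}[0,c]$ is \emph{exactly} the ball $r_k B^k$, with the Beta integral producing the stated constant. This identity is the moral reason for the theorem, but $F$ is only assumed linear on finite Minkowski combinations, so it cannot be applied directly to a continuous Minkowski average; the proof must approximate.

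The key preliminary observation is that $c \mapsto F([0,c])$ is Lipschitz. Since $[0,c] \subseteq [0,c'] + |c-c'|B^k$, linearity and monotonicity give $F([0,c]) \le F([0,c']) + |c-c'|\,F(B^k)$, and by symmetry $|F([0,c]) - F([0,c'])| \le |c-c'|\,F(B^k)$, so $F([0,\cdot])$ is continuous and bounded on $S^{k-1}$. I would then take any sequence of finitely supported probability measures $\mu_N = \sum_i \lambda_i^{(N)}\delta_{c_i^{(N)}}$ converging weakly to the uniform measure on $S^{k-1}$. By finite linearity, $\int F([0,c])\dd\mu_N = F(Z_N)$, where $Z_N = \sum_i \lambda_i^{(N)}[0,c_i^{(N)}]$ is a zonotope whose support function $h_{Z_N}(u) = \int \langle u,c\rangle_+\dd\mu_N(c)$ converges \emph{uniformly} in $u$ to the constant $r_k$ (the integrands $u \mapsto \langle u,c\rangle_+$ are equi-Lipschitz). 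Consequently $Z_N \subseteq R_N B^k$ with $R_N = \max_{|u|=1} h_{Z_N}(u) \to r_k$, and monotonicity yields $F(Z_N) \le R_N\,F(B^k)$. Since $F([0,\cdot])$ is continuous, the left-hand side $\int F([0,c])\dd\mu_N$ converges to $\avg{c}F([0,c])$, and letting $N \to \infty$ gives the bound.

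I expect this limiting step — commuting $F$ past the continuous Minkowski average while only finite linearity is available — to be the main obstacle. Monotonicity is precisely what resolves it: it converts the approximate statement ``$Z_N$ is nearly the ball $r_k B^k$'' into the clean one-sided estimate $F(Z_N) \le R_N F(B^k)$, so that the slight excess of the approximating zonotope over $r_k B^k$ can only help.

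Finally, to show $r_k$ cannot be lowered, I would exhibit a functional attaining equality, namely the mean width $F(\mathcal A) = \int_{S^{k-1}} h_{\mathcal A}(u)\dd\sigma(u)$ with $\sigma$ the uniform probability measure. It is linear and monotone because $h$ is Minkowski-additive and order-preserving. Here $F(B^k) = \int_{S^{k-1}} 1\dd\sigma = 1$, while Fubini together with the zonoid computation gives $\avg{c}F([0,c]) = \int_{S^{k-1}} \bigl(\avg{c}\langle u,c\rangle_+\bigr)\dd\sigma(u) = \int_{S^{k-1}} r_k\dd\sigma(u) = r_k$. Thus equality holds for this $F$, and no constant smaller than $r_k$ can satisfy the inequality for all linear monotone functionals.
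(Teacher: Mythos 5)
Your proof is correct, and it reaches the lemma by a genuinely different route than the paper. Both arguments share the same skeleton --- approximate the spherical average by a finite Minkowski combination, apply finite linearity, then bound the resulting body by a ball using monotonicity --- but the implementations diverge. The paper discretizes the sphere geometrically: it covers $S^{k-1}$ by cells $V_\lambda$ of radius $\epsilon$ and replaces each segment $[0,c]$, $c\in V_\lambda$, by the cone $W_\lambda=\conv{\{0\}\cup V_\lambda}$, so monotonicity is invoked \emph{before} linearity and the discrete sum dominates the average outright; no continuity of $c\mapsto F([0,c])$ is ever needed. It then finds the circumradius of $\sum_\lambda (\vol V_\lambda)\, W_\lambda$ in the limit by a divergence-theorem computation, obtaining $r_k=\vol B^{k-1}/\vol S^{k-1}$. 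You instead discretize the measure: finite linearity is applied to the zonotopes $Z_N=\sum_i\lambda_i[0,c_i]$, their circumradii are controlled through support functions ($h_{Z_N}\to r_k$ uniformly, the constant emerging from the Beta integral of $\langle u,c\rangle_+$), and passing to the limit on the left-hand side requires your Lipschitz estimate $|F([0,c])-F([0,c'])|\le |c-c'|\,F(B^k)$ --- an ingredient the paper's cone trick sidesteps, but which has the side benefit of justifying measurability of $c\mapsto F([0,c])$, a point the paper leaves implicit. What your route buys is conceptual transparency: the limiting object is identified as the zonoid $r_k B^k$, so both the value of the constant and its sharpness become immediate; indeed your sharpness example (mean width) is, up to a constant factor, the same functional $V(B^k,\dots,B^k,X)$ used in the paper, but your Fubini argument replaces the paper's extraction of the coefficient of $t_1\cdots t_k$ in $\vol\left((t_1+\cdots+t_{k-1})B^{k-1}+t_k[0,\mathrm e_k]\right)$. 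What the paper's route buys is economy: the one-sided cone bound needs no equicontinuity or weak-convergence machinery, only the final flux calculation.
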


\begin{proof}
For every $\epsilon > 0$, there is a closed covering of $S^{k-1}$ into measurable subsets of
radius $\epsilon$, say $S^{k-1} = \cup_{\lambda \in \Lambda} V_{\lambda}$, and
such that $V_{\lambda} \cap V_{\lambda'}$ has measure zero for $\lambda \ne \lambda'$.
The sets $W_{\lambda} = \conv{\{0\} \cup V_{\lambda}}$ are a closed covering of
$B^k$ with the same property. 

\begin{align*}
\avg{c \in S^{k-1}} ( F([0,c])) &= \frac{1}{\vol{S^{k-1}}} \int_{S^{k-1}} F([0,c]) \ \dd S^{k-1}(c) && \text{by definition,}\\
&=\frac{1}{\vol{S^{k-1}}} \sum_{\lambda \in \Lambda} \int_{V_{\lambda}} F([0,c]) \ \dd S^{k-1}(c) && \text{trivially,}\\
&\le\frac{1}{\vol{S^{k-1}}} \sum_{\lambda \in \Lambda} \int_{V_{\lambda}} F(W_{\lambda}) \ \dd S^{k-1}(c) && \text{by monotonicity,}\\
&\le\frac{1}{\vol{S^{k-1}}} \sum_{\lambda \in \Lambda} (\vol{V_{\lambda}}) F(W_{\lambda}) && \text{trivially,}\\
&\le\frac{1}{\vol{S^{k-1}}} F\left(\sum_{\lambda \in \Lambda} (\vol{V_{\lambda}}) W_{\lambda}\right) && \text{by linearity}\\
\end{align*}
After an orthogonal change of coordinates, assume that $\rho(\epsilon) \mathrm e_1$ is a point of largest norm
in $\sum_{\lambda \in \Lambda} (\vol{V_{\lambda}}) W_{\lambda}$. In particular, there are $x_{\lambda} \in W_{\lambda}$
such that $\rho(\epsilon) \mathrm e_1 = \sum_{\lambda \in \Lambda} (\vol{V_{\lambda}}) x_{\lambda}$.
So,
\[
\rho(\epsilon) = \sum_{\lambda \in \Lambda} (\vol{V_{\lambda}}) (x_\lambda)_1 \le  
\sum_{ (x_{\lambda})_1 \ge 0} \vol{V_{\lambda}} \frac{(x_\lambda)_1}{\|x_\lambda\|}.
\]

Passing to the limit,
\begin{align*}
\lim_{\epsilon \rightarrow 0}\rho(\epsilon) &= 
\int_{S^{k-1} \cap [y_1 \ge 0]} y_1 \dd S^{k-1}(y) \\
               &= 
\int_{S^{k-1} \cap [y_1 \ge 0]} \mathrm e_1 \cdot  \dd n(y) && \text{interpreting as a flow,}\\
               &= 
\int_{B^{k} \cap [y_1 \ge 0]} \mathrm{div}(\mathrm e_1)  \dd B^{k}(y) 
-
\int_{B^{k-1}} \mathrm e_1 \cdot  \dd n(y)&&\text{by Gauss theorem,} \\
               &=
0 - \int_{B^{k-2}} -1 \dd B^{k-1}(y) \\
&= \vol{B^{k-1}}
\end{align*}

By monotonicity of $F$ and then linearity, we conclude that
\[
\avg{c \in S^{k-1}} ( F([0,c])) \le 
\frac{1}{\vol S^{k-1}} F( \lim_{\epsilon \rightarrow 0} \rho(\epsilon) \ B^k)
\le
r_k F(B^k) 
\]
with $r_k = \vol{B^{k-1}} / \vol{S^{k-1}}$. Using
the well-known formulas
\[
\vol B^{k} = \frac{2 \pi^{k/2}}{k\  \Gamma\left( \frac{k}{2} \right)}
\hspace{3em}
\text{and}
\hspace{3em}
\vol S^{k-1} = \frac{2 \pi^{k/2}}{\Gamma\left( \frac{k}{2} \right)}
\]
we deduce that
\[
r_k = \frac{\Gamma\left( \frac{k}{2} \right)}{\sqrt{\pi} (k-1)\  \Gamma\left( \frac{k-1}{2} \right)}
.
\]
\medskip
\par
To establish sharpness, we consider the operator $F(X) = V(B^k, \dots, B^k, X)$. Because
of the properties of the mixed volume,
\[
F( [0, c] ) = V(B^{k} \cap c^{\perp}, \dots, B^{k} \cap c^{\perp},[0,c])
.
\]
Thus,
\[
\avg{c \in S^{k-1}} F([0,c]) = F( [0, \mathrm e_k] ) = V(B^{k-1}\times\{0\}, \dots, B^{k-1} \times\{0\},[0,\mathrm e_k])
.
\]

This last value is by definition $\frac{1}{k!}$ times the coefficient in $t_1 t_2 \cdots t_k$ of
\[
\vol{ (t_1+\cdots+t_{k-1}) B^{k-1} + t_k[0,\mathrm e_k] } = (t_1+\cdots+t_{k-1})^{k-1}t_k \vol B^{k-1}.
\]
Thus,
\[
\avg{c \in S^{k-1}} F([0,c]) = \frac{1}{k} \vol B^{k-1} 
\]
while the predicted value is
$r_k \vol B^k = \frac{\vol{B^{k-1}}}{\vol{S^{k-1}}} \vol{B^k} = \frac{1}{k}\vol B^{k-1}$.
\end{proof}

\begin{proof}[Proof of Theorem \ref{main}]
The last row $q_n$ of $Q$ is uniformly distributed in $S^{n-1}$,
so
\[
\begin{split}
\avg{Q \in SO(n)} 
\left( V(\mathcal A_1,\right.\left. \cdots, \mathcal A_d,\right.&\left. [0,q_{d+1}], \cdots, [0,q_n]) 
\right)=
\\
&=\avg{q_n \in S^{n-1}} \left( 
\avg{}
\left(
V(\mathcal A_1, \cdots, \mathcal A_d, [0,q_{d+1}], \cdots, [0,q_n]) 
\right) \right)
\end{split}
\]
where the inner average is taken over orthonormal frames $(q_{d+1}, \dots, q_{n-1})$
that are orthogonal to $q_n$.
By Lemma~\ref{lem:needle},
\[
\begin{split}
\avg{Q \in SO(n)} 
\left( V(\mathcal A_1,\right.&\left. \cdots, \mathcal A_d, [0,q_{d+1}], \cdots, [0,q_n]) 
\right)=
\\
&=
r_{n} 
\avg{}
\left(
V(\mathcal A_1, \cdots, \mathcal A_d, [0,q_{d+1}], \cdots, [0,q_{n-1}], B^n) 
\right).
\end{split}
\]

Repeating the same argument, 

\[
\begin{split}
\avg{Q \in SO(n)} V(\mathcal A_1, \cdots, \mathcal A_d, &  [0,q_{d+1}], \cdots, [0,q_n]) 
\\
&\le
r_{d+1} r_{d+2} \cdots r_n 
V(\mathcal A_1, \cdots, \mathcal A_d, B^n, \cdots, B^n) 
.
\end{split}
\]

Finally,
\[
r_{d+1} r_{d+2} \cdots r_n 
=
\frac{\Gamma(n/2)\, \Gamma(d)}{\pi^{\frac{n-d}{2}}\, \Gamma(d/2)\, \Gamma(n)}
\]
\end{proof}

\begin{bibsection}
\begin{biblist}

\bib{Abramowitz-Stegun}{book}{
   author={Abramowitz, Milton},
   author={Stegun, Irene A.},
   title={Handbook of mathematical functions with formulas, graphs, and
   mathematical tables},
   series={National Bureau of Standards Applied Mathematics Series},
   volume={55},
   publisher={For sale by the Superintendent of Documents, U.S. Government
   Printing Office, Washington, D.C.},
   date={1964},
   pages={xiv+1046},
   review={\MR{0167642 (29 \#4914)}},
}

\bib{Minkowski}{article}{
   author={Minkowski, Hermann},
   title={Sur les surfaces convexes fermées},
   journal={C.R. Acad.Sci., Paris},
   volume={132},
   pages={21--24},
   year={1901}
   }
\end{biblist}
\end{bibsection}

\end{document}